\newcommand{\R}{{\mathbb R}}
\newcommand{\N}{{\mathbb N}}
\newcommand{\C}{{\mathbb C}}
\newcommand{\EE}{{\mathbb E}}
\newcommand{\PP}{{\mathbb P}}
\DeclareMathOperator\supp{supp}
\newcommand{\eps}{\varepsilon}
\newcommand\norm[1]{\lVert#1\rVert}
\newcommand{\tphi}{\widetilde \varphi}
\newcommand{\bi}{\mathbf i}
\newcommand{\F}{\mathscr{F}}
\theoremstyle{plain}
\newtheorem{theorem}{Theorem}
\newtheorem{prop}{Proposition}
\newtheorem{lemma}{Lemma}
\newtheorem{cor}{Corollary}
\theoremstyle{definition}
\newtheorem{rem}{Remark}
\begin{document}

\title[Regularity properties of densities of SDEs]{Regularity properties of densities of SDEs using the Fourier analytic approach}

\author[Ellinger]
{Simon Ellinger}
\address{
Faculty of Computer Science and Mathematics\\
University of Passau\\
Innstrasse 33 \\
94032 Passau\\
Germany} \email{simon.ellinger@uni-passau.de}

\begin{abstract}	
	We show regularity properties of local densities of solutions of stochastic differential equations (SDEs) with the Fourier analytic approach. With this simple method, statements that were previously derived with approaches using Malliavin calculus or difference operators can be recovered and extended to include regularity properties with respect to the time variable. For example, we derive the Hölder continuity and joint continuity of local densities in the case of drift coefficients that are locally piecewise Hölder continuous. To this end, we derive fairly general bounds for the Fourier transform of the local density of a solution of the SDE when the drift is locally bounded and the diffusion is locally sufficiently regular. 
\end{abstract}
\maketitle

Consider a scalar autonomous stochastic differential equation (SDE)
\begin{equation}\label{sde0}
	\begin{aligned}
		dX_t & = \mu(X_t) \, dt +  \sigma(X_t) \, dW_t, \quad t\in [0,1],\\
		X_0 & = x_0
	\end{aligned}
\end{equation}
with deterministic initial value $x_0\in\R$, drift coefficient $\mu\colon\R\to\R$, diffusion coefficient $\sigma\colon \R \rightarrow \R$ and a one-dimensional driving
Brownian motion $W=(W_t)_{t\in[0,1]}$. Assume that the coefficients $\mu, \sigma$ are smooth enough such that there exists a strong solution of the SDE~\eqref{sde0}.

In this paper we study the local regularity of densities of solutions $X$ of the SDE~\eqref{sde0} for sufficiently regular coefficients $\mu, \sigma$. The existence of such densities
was proven in \cite{FP10} using a Fourier analytic approach. The main idea therein is to show that the characteristic function of $X_t$ is in $L^2(\R)$ which then implies the existence of a density. Thereby, the characteristic function of $X_t$ is bounded by approximating $X_t$ with  $Z_{t, \eps} = X_{t-\eps} + \sigma(X_{t-\eps})(W_t - W_{t-\eps})$ for $t \in (0,1], \eps \in (0,t)$. In many situations, in addition to the existence of densities, knowledge such as boundedness, continuity or Sobolev regularity would be advantageous. However, it was assumed, see e.g. \cite{debussche2013, Hayashi2014, Hayashi2012, Makhlouf2016}, that the simple method from \cite{FP10} is not suitable to derive such properties. As a result, the ideas of \cite{FP10} were further developed in two directions in order to obtain stronger statements regarding the regularity of the densities.

The first approach goes back to \cite{debussche2014} and relies mainly on the central idea of \cite{FP10} to approximate $X_t$ by $Z_{t, \eps}$. Instead of bounding the characteristic function of the random variable, one estimates difference operators appropriately. In this way, Besov regularity of global densities is obtained in \cite{debussche2014}. In \cite{rom18} this approach was further generalized and they provide bounds for the local $B_{1, \infty}^\beta$-norm of the local density, if $\mu$ is locally bounded, $\sigma$ is locally non-degenerate and $\alpha$-Hölder continuous where $\alpha \in (0,1)$, $\beta \in (0,\alpha)$, see Theorem 4.1 in \cite{rom18}. In this paper it was furthermore shown that the global density of $X_t$
has a Hölder continuous density if $\mu$ is bounded and $\sigma$ is elliptic and Lipschitz continuous.
A similar statement about the Hölder continuity of the density is also shown in \cite{Banos17} using a variation of the approach with the difference operator. For further results shown with the help of difference operators under global assumptions on the coefficients
see e.g. \cite{debussche2013, Friesen2021, rom16}.

The second extension is based on Malliavin calculus and was used among others in \cite{Hayashi2012}. There it was shown that the local density is Hölder continuous of order $\gamma \in (0,\alpha)$ if the function $\frac{\mu}{\sigma}$ is $\alpha$-Hölder continuous on an open interval $I$ and if $\sigma \in C^\infty_b(I)$ with $\inf_{x \in I} |\sigma(x)| > 0$ is the case. Furthermore, in \cite{deMarco11}, regularity properties of the local density of $X_t$ on an open interval $I$ are derived under the assumption $\mu, \sigma \in C^\infty_b(I)$.
Further results under global assumptions on $\mu, \sigma$, which are shown with the Malliavin calculus approach, can already be traced back to \cite{Bichteler1983}. Later results in this area can be found in \cite{Banos16, Bally2011, Hayashi2014, Hayashi2012, Pellat2024, Nakatsu2023, Nguyen2015, Olivera2019, Tahmasebi2014}.

In this paper we focus again on the original Fourier analytic approach of \cite{FP10}.  We will use this simple approach despite the above mentioned skeptical remarks in \cite{debussche2013, Hayashi2014, Hayashi2012, Makhlouf2016} to derive regularity properties for the local densities of the solutions of the SDE~\eqref{sde0} in a rather direct way. This is done with appropriate bounds on the Fourier transforms of localized measures, as can be seen in the following theorem.

\begin{theorem}\label{thm:localDensity}
	Let $\mu, \sigma\colon \R \rightarrow \R$ be measurable functions and assume that there exist $\xi \in \R$, $\delta \in (0,\infty)$ such that $\mu\vert_{[\xi - \delta, \xi+\delta]}$ is bounded, $\inf_{x \in B_\delta(\xi)} |\sigma(x)| \ge l_\sigma > 0$ for $l_\sigma \in (0, \infty)$ and $\sigma$ is Lipschitz continuous on $[\xi-\delta, \xi+\delta]$. Let $\sigma^\ast$ denote the constant continuation of $\sigma\vert_{[\xi - \delta, \xi+\delta]}$ given by
	\[
	\sigma^\ast = \sigma(\xi - \delta) 1_{(-\infty, \xi - \delta)} + \sigma 1_{[\xi-\delta, \xi + \delta]} + \sigma(\xi + \delta) 1_{(\xi + \delta, \infty)}.
	\]
	Let $X$ be a strong solution of the SDE~\eqref{sde0}, $\delta_0 \in (0, \delta)$ and $\varphi\colon \R \rightarrow \R$ be a differentiable function which has a Lipschitz continuous derivative $\varphi'$ and which satisfies $\supp(\varphi) \subset B_{\delta - \delta_0}(\xi)$. Then there exists a constant $c = c(\norm{\mu}_{L^\infty([\xi - \delta, \xi+\delta])}, \norm{\sigma^\ast}_{C^1(\R)}, l_\sigma,\norm{\varphi}_{C^2(\R)}, \delta_0) \in (0, \infty)$ such that for all $t \in (0,1]$ there is a function $p_t \in L^2(\R)$ with
	\begin{equation}\label{thm1_density}
		\varphi(x) \, \PP^{X_t}(dx) = p_t \bigl(\int_{\xi-\delta}^x \frac{1}{\sigma^\ast(z)} \, dz \bigr) \, \big/ \, |\sigma^\ast(x)| \, dx
	\end{equation}
	and for all $\eps \in (0, t)$ and $\lambda$-almost all $y \in \R$ it holds $| \F p_t (y) | \le \| \varphi \|_\infty$ as well as
	\begin{equation*}
		\begin{aligned}
			&|\F p_t (y)|  
			\\& \qquad \le c \Big[(1 +  \eps |y|)e^{-(\eps y^2)/2} + \eps \\
			&\qquad \qquad   + (1+ |y|) \EE \bigl [1_{\{ \forall s \in [t-\eps, t]:|X_s - \xi| \le \delta \}} \Big|\int_{t - \eps}^t \Big(\frac{\mu}{\sigma^\ast} - \frac{\delta_{\sigma^\ast}}{2}\Big)(X_s) - \Big(\frac{\mu}{\sigma^\ast} - \frac{\delta_{\sigma^\ast}}{2}\Big)(X_{t- \eps}) \, ds\Big|\bigr ] \Big]
		\end{aligned}
	\end{equation*}
	where $\delta_{\sigma^\ast}(x) = (\sigma^\ast)'(x)$ if $\sigma^\ast$ is differentiable in $x$ and $\delta_{\sigma^\ast}(x) = 0$ otherwise for $x \in \R$.
\end{theorem}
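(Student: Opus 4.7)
The plan is to combine a Lamperti-type change of variable with the Fournier–Printems conditioning/Euler-approximation argument from \cite{FP10}. Since $\sigma^\ast$ is Lipschitz on $\R$ with $\inf_\R|\sigma^\ast|\ge l_\sigma>0$, the map $\Phi(x):=\int_{\xi-\delta}^x \frac{dz}{\sigma^\ast(z)}$ is a bi-Lipschitz $C^1$-diffeomorphism of $\R$ whose derivative $1/\sigma^\ast$ is itself Lipschitz; in particular $\Phi\in W^{2,\infty}(\R)$ with weak second derivative $-\delta_{\sigma^\ast}/(\sigma^\ast)^2$. Setting $Z_t:=\Phi(X_t)$ and $g:=\mu/\sigma^\ast-\delta_{\sigma^\ast}/2$, an Itô–Krylov formula (justified by mollifying $\sigma^\ast$ and passing to the limit via the occupation-time formula) gives, on the event
\[
E_\eps\ :=\ \{\, X_s\in[\xi-\delta,\xi+\delta]\ \text{for all}\ s\in[t-\eps,t]\,\},
\]
where $\sigma\equiv\sigma^\ast$, the representation $Z_t-Z_{t-\eps}=(W_t-W_{t-\eps})+\int_{t-\eps}^t g(X_s)\,ds$. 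I then \emph{define} $p_t$ through $\F p_t(y):=\EE[\varphi(X_t)\,e^{-iyZ_t}]$; the trivial bound $|\F p_t(y)|\le\norm{\varphi}_\infty$ is immediate, and once the refined estimate below is proven, Plancherel forces $p_t\in L^2(\R)$. Identity \eqref{thm1_density} is then verified on test functions $f\in C_c(\R)$ via the substitution $y=\Phi(x)$, since $\int f(\Phi^{-1}(y))p_t(y)\,dy=\EE[\varphi(X_t)f(X_t)]$ by construction.

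For the refined bound, I split $\F p_t(y)=\EE[\varphi(X_t)e^{-iy Z_t}1_{E_\eps}]+\EE[\varphi(X_t)e^{-iy Z_t}1_{E_\eps^c}]$. Because $\supp\varphi\subset B_{\delta-\delta_0}(\xi)$, a nonzero contribution on $E_\eps^c$ forces $X$ to travel distance $\delta_0$ and return within time $\eps$, and standard Doob-type exit estimates using the local boundedness of $\mu$ and $\sigma$ on $B_\delta(\xi)$ bound this probability by $c_k\eps^k$ for any $k$, absorbed into the $c\eps$ summand. On $E_\eps$ I freeze the drift: $Z_t=Z_{t-\eps}+g(X_{t-\eps})\eps+(W_t-W_{t-\eps})+R$ with $R:=\int_{t-\eps}^t(g(X_s)-g(X_{t-\eps}))\,ds$, and I replace $X_t$ inside $\varphi(X_t)$ by its Euler–Maruyama proxy $X_{t-\eps}+\sigma(X_{t-\eps})(W_t-W_{t-\eps})$, using a second-order Taylor expansion of $\varphi$ (whose remainder is $O(|X_t-X_{t-\eps}|^2)$ and whose first-order correction is linear in $\Delta W:=W_t-W_{t-\eps}$). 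Conditioning on $\Fc_{t-\eps}$ reduces the computation to the Gaussian identities
\[
\EE[e^{-iy\Delta W}]=e^{-\eps y^2/2},\qquad \EE[\Delta W\,e^{-iy\Delta W}]=-i\eps y\,e^{-\eps y^2/2},
\]
so the leading term and the $\varphi'$-correction together produce the $(1+\eps|y|)e^{-\eps y^2/2}$ contribution. The Taylor and Euler remainders, bounded by standard moment estimates $\EE|X_t-X_{t-\eps}|^p\le c\eps^{p/2}$ on $E_\eps$, give the isolated $\eps$ summand, while the inequality $|e^{-iyR}-1|\le|y|\,|R|$ (combined with the trivial bound $|e^{-iyR}-1|\le 2$ to upgrade $|y|$ to $1+|y|$ for small $|y|$) produces the final drift remainder.

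The main technical obstacle is the clean bookkeeping of errors so that the final constant depends only on the five quantities listed in the theorem. Concretely, one must verify (i) that the moment bounds on $X$ and the exit probability of $B_\delta(\xi)$ depend only on $\norm{\mu}_{L^\infty([\xi-\delta,\xi+\delta])}$, $\norm{\sigma^\ast}_{C^1(\R)}$, $l_\sigma$ and $\delta_0$—only local boundedness of $\mu$ is available, so a stopping argument at the exit of $B_\delta(\xi)$ is needed before any global moment inequality can be invoked—and (ii) that the mollification step in the Itô–Krylov argument selects precisely the representative $\delta_{\sigma^\ast}$ of $(\sigma^\ast)'$ appearing in the claim (and not merely some a.e.-equivalent version), so the remainder term is meaningful. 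Once both points are settled, every remaining step is a reduction to the two Gaussian Fourier identities above.
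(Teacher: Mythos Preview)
Your proposal is correct and follows essentially the same strategy as the paper: Lamperti transform via $\Phi(x)=\int_{\xi-\delta}^x dz/\sigma^\ast(z)$, localization to the event where $X$ stays in $B_\delta(\xi)$ on $[t-\eps,t]$, an Euler-type freezing of the drift, a second-order Taylor expansion of $\varphi$, and reduction to the two Gaussian Fourier identities. The paper organizes this slightly differently---it first isolates the special case $\sigma\vert_{[\xi-\delta,\xi+\delta]}=1$ as a standalone proposition and only afterwards applies the Lamperti transform to reduce the general case to it---but the computations are the same; in particular, the paper's exit-probability estimate uses a chain of three auxiliary stopping times to handle the ``leave $B_\delta(\xi)$ and return to $\supp\varphi$'' scenario you describe, and its It\^o formula step avoids your mollification argument by directly invoking a version for $C^1$ functions with Lipschitz derivative (since $\Phi'=1/\sigma^\ast$ is Lipschitz), which makes your concern (ii) about selecting the representative $\delta_{\sigma^\ast}$ essentially moot (the occupation-time density on $B_\delta(\xi)$ renders the choice irrelevant a.s.).
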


So the better you can estimate the object $\Big|\int_{t-\eps}^t(\frac{\mu}{\sigma^\ast} - \frac{\delta_{\sigma^\ast}}{2})(X_s) - (\frac{\mu}{\sigma^\ast} - \frac{\delta_{\sigma^\ast}}{2})(X_{t- \eps}) \, ds\Big|$ in the last term of Theorem~\ref{thm:localDensity}, the more regularity you get. It is worth pointing out that this was also recognized in the approach with the difference operator in the case when $\sigma = 1$, see Section 2.2 in~\cite{rom18}. An advantage of the pointwise bound for the Fourier transform of the density from Theorem~\ref{thm:localDensity} in comparison to the bounds for the Besov-norms in~\cite{rom18} is that this allows, for example, an application of the dominated convergence theorem which will later be used to obtain regularity of the densities with respect to the time.

It should be emphasized that the proof of Theorem~\ref{thm:localDensity} is based on elementary calculations and does not require knowledge about Besov theorey as in \cite{rom18} or Malliavin calculus as in \cite{deMarco11, Hayashi2012}. Furthermore, Theorem~\ref{thm:localDensity} can be used to derive statements for densities under global assumptions, which is illustrated in the following remark.

\begin{rem}
	Note that since the constant $c$ in Theorem~\ref{thm:localDensity} only depends on $\norm{\mu}_{L^\infty([\xi - \delta, \xi+\delta])}$, $\norm{\sigma^\ast}_{C^1(\R)}, l_\sigma,\norm{\varphi}_{C^2(\R)}, \delta_0$ one can derive regularity properties for densities of SDEs even in the global setting whenever $\sigma$ and $\frac{\mu}{\sigma^\ast} - \frac{\delta_{\sigma^\ast}}{2}$ are nice enough.  Consider therefore a sequence $(\varphi_k)_{k \in \N} \subset C^\infty(\R)$ such that $\varphi_k\vert_{[-k,k]} = 1$, $\supp(\varphi_k) \subset (-(k+1), k+1)$ and $\norm{\varphi_k}_{C^2(\R)} = \norm{\varphi_1}_{C^2(\R)}$ for all $k \in \N$ and then let $k$ go to infinity. Moreover, with slight modifications of the proof of Proposition~\ref{prop:fourierBound} and Theorem~\ref{thm:localDensity} one may only assume that $\mu$ satisfies the linear growth property instead of global boundedness in this case.
\end{rem}

In applications of Theorem~\ref{thm:localDensity} we will mostly use $\eps = \log^2(|y|) / y^2$ as in~\cite{FP10}. The resulting bound is stated in the next corollary.

\begin{cor}\label{cor1:fourierBound}
	Under the assumptions of Theorem~\ref{thm:localDensity} for all $t \in (0,1]$ there is a function \mbox{$p_t \in L^2(\R)$} which satisfies \eqref{thm1_density} and there exists a constant $c \in (0, \infty)$ such that for all $t \in (0,1]$ and $\lambda$-almost all $y \in \R$ with $|y| > 1$ and $\frac{\log^2(|y|)}{|y|^2} < t$ it holds $| \F p_t (y) | \le \| \varphi \|_\infty$ as well as
	\begin{equation*}
		\begin{aligned}
			|\F p_t (y)| & \le c \Big[ |y|^{-\log(|y|)/2} + \frac{\log^2(|y|)}{y^2} \\
			& \qquad +|y|\EE \bigl [1_{\{ \forall s \in [t-\eps_y, t]:|X_s - \xi| \le \delta \}} \Big|\int_{t - \eps_y}^t \Big(\frac{\mu}{\sigma^\ast} - \frac{\delta_{\sigma^\ast}}{2}\Big)(X_s) - \Big(\frac{\mu}{\sigma^\ast} - \frac{\delta_{\sigma^\ast}}{2}\Big)(X_{t- \eps_y})\, ds\Big|\bigr ] \Big]
		\end{aligned}
	\end{equation*} 
	where $\eps_y := \log^2(|y|) / y^2$.
\end{cor}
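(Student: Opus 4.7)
The plan is to specialize Theorem~\ref{thm:localDensity} to the choice $\eps = \eps_y := \log^2(|y|)/y^2$ and rewrite the three resulting terms. Since the function $p_t$ produced by Theorem~\ref{thm:localDensity} depends only on $t$ (not on $\eps$), and since the $\lambda$-null exceptional set given by that theorem likewise does not depend on the choice of $\eps$, we obtain \eqref{thm1_density} and the trivial bound $|\F p_t(y)|\le\|\varphi\|_\infty$ at once, and are free to apply the refined bound in Theorem~\ref{thm:localDensity} at each admissible $y$ with $\eps$ tailored to that $y$. The constraint $\eps_y < t$ is exactly what is needed to legitimize this application, and $|y|>1$ guarantees $\log(|y|)>0$.

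For the Gaussian factor, a direct computation gives
\[
e^{-\eps_y y^2/2} = e^{-\log^2(|y|)/2} = |y|^{-\log(|y|)/2},
\]
which produces the first term on the right-hand side of the corollary. The prefactor $1 + \eps_y|y| = 1 + \log^2(|y|)/|y|$ is uniformly bounded on $\{|y|>1\}$, since $\log^2(|y|)/|y|$ is continuous on $[1,\infty)$ and tends to $0$ as $|y|\to\infty$; thus $(1+\eps_y|y|)\,e^{-\eps_y y^2/2}$ is absorbed, up to a universal multiplicative constant, into $|y|^{-\log(|y|)/2}$. The second term $\eps_y = \log^2(|y|)/y^2$ matches the corollary verbatim, and for $|y|>1$ we have $1+|y|\le 2|y|$, so the prefactor $(1+|y|)$ multiplying the expectation is replaced by $|y|$ at the cost of a constant.

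Redefining the constant $c$ to absorb the finitely many bounded factors above yields the stated inequality. The proof requires no new ideas beyond the choice of $\eps_y$; the only point that deserves brief mention is the uniform boundedness of $\log^2(|y|)/|y|$ on $\{|y|>1\}$, which prevents the $(1+\eps_y|y|)$ prefactor from contributing an unwanted logarithmic factor and thereby allows the clean form $|y|^{-\log(|y|)/2}$ to appear in the final bound.
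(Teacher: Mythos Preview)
Your proof is correct and follows exactly the approach implicit in the paper, which treats the corollary as an immediate specialization of Theorem~\ref{thm:localDensity} to $\eps=\eps_y=\log^2(|y|)/y^2$. Your explicit justification that the $\lambda$-null exceptional set in Theorem~\ref{thm:localDensity} is independent of $\eps$ (so that the $y$-dependent choice of $\eps$ is legitimate) and the verification that $(1+\eps_y|y|)$ is uniformly bounded for $|y|>1$ are precisely the details one would expect to fill in.
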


With the help of Theorem~\ref{thm:localDensity} and Corollary~\ref{cor1:fourierBound}, one may then prove continuity and boundedness of local densities. This is shown exemplarily for locally piecewise Hölder continuous coefficients. In this case the local densities are Hölder continuous, which can be seen from the following corollary. In particular, the statement is a generalization of Theorem 2 in~\cite{Hayashi2012} since $\sigma$ is allowed to be less regular. The next corollary covers also the special case of piecewise Lipschitz continuous coefficients. The existence of such SDEs has been investigated among others
in \cite{LS16, LS15b, MGY20,Soenmez2023}, where \cite{Soenmez2023} showed the existence of a density of the strong solution using the approach with the difference operator.

\begin{cor}\label{cor:HoelderReg}
	Under the assumptions of Theorem~\ref{thm:localDensity} and assuming that $\frac{\mu}{\sigma} - \frac{\sigma'}{2}$ is $\alpha$-Hölder continuous on $[\xi - \delta, \xi)$ and on $(\xi, \xi + \delta]$ for $\alpha \in (0,1]$ the measure $\varphi(x) \, \PP^{X_t}(dx)$ has a Lebesgue density $q_t \in L^1(\R)$ which is $\gamma$-Hölder continuous for all $\gamma \in (0, \alpha)$, where $t \in (0,1]$, and for all $t^\ast \in (0,1], \gamma \in (0, \alpha)$ it holds
	\begin{equation}\label{eqn_cor_localHoeld_1}
	\sup_{t \in [t^\ast, 1]} \norm{q_t}_{C^\gamma(\R)} < \infty.
	\end{equation}
	Moreover, the function
	\[
	(0,1] \times \R \rightarrow \R, \qquad (t,x) \mapsto q_t(x)
	\]
	is continuous.
\end{cor}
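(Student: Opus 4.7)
The plan is to apply Corollary~\ref{cor1:fourierBound} with $\eps_y = \log^2(|y|)/y^2$ and convert the resulting Fourier estimate for $p_t$ into Hölder regularity of $q_t$. Set $f := \frac{\mu}{\sigma^\ast} - \frac{\delta_{\sigma^\ast}}{2}$ and $E := \{\forall s \in [t-\eps_y,t] : |X_s-\xi|\le\delta\}$. On $E$ the process $X$ stays inside $[\xi-\delta,\xi+\delta]$, where $f$ agrees with $\frac{\mu}{\sigma}-\frac{\sigma'}{2}$. By hypothesis the latter is $\alpha$-Hölder continuous on each side of $\xi$ and bounded, so writing $L$ for the joint Hölder constant and $J$ for a bound on the jump of $f$ at $\xi$,
\begin{equation*}
|f(X_s)-f(X_{t-\eps_y})|\le L\,|X_s-X_{t-\eps_y}|^\alpha + J\cdot 1_{\{X_s,\,X_{t-\eps_y}\text{ on opposite sides of }\xi\}}
\end{equation*}
on $E$, and the task reduces to estimating the two contributions separately.

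For the Hölder piece, a stopping argument at the first exit of $X$ from $[\xi-\delta,\xi+\delta]$ makes the drift and diffusion of the stopped process bounded, so Burkholder--Davis--Gundy combined with Jensen yields $\EE[1_E\,|X_s-X_{t-\eps_y}|^\alpha]\le C(s-(t-\eps_y))^{\alpha/2}$, which integrated over $s$ contributes $O(\eps_y^{1+\alpha/2})$. For the jump piece I would use the sub-Gaussian estimate $\PP(|X_{s\wedge\tau}-X_{(t-\eps_y)\wedge\tau}|\ge a)\le C e^{-ca^2/\eps_y}$ coming from the exponential martingale inequality applied to the stopped process, together with an $L^2$-bound on the density of $X_{t-\eps_y}$ near $\xi$ (obtained from Theorem~\ref{thm:localDensity} applied with a slightly enlarged cutoff, valid for $|y|$ so large that $\eps_y\le t/2$). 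On $E$, opposite-sides implies $|X_s-X_{t-\eps_y}|\ge|X_{t-\eps_y}-\xi|$, so integrating the Gaussian tail against the $L^2$ density via Cauchy--Schwarz yields $\PP(E\cap\{\text{opposite sides at time }s\})\le C\sqrt{\eps_y}$ and hence an $O(\eps_y^{3/2})$ contribution.

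Inserting $\eps_y=\log^2(|y|)/y^2$ into Corollary~\ref{cor1:fourierBound} now gives, for $|y|$ sufficiently large,
\begin{equation*}
|\F p_t(y)|\le c\bigl(|y|^{-\log(|y|)/2} + \log^2(|y|)\,|y|^{-2} + \log^{2+\alpha}(|y|)\,|y|^{-1-\alpha}\bigr),
\end{equation*}
and together with $|\F p_t(y)|\le\|\varphi\|_\infty$ for bounded $|y|$ this makes $(1+|y|^\gamma)\,|\F p_t(y)|$ integrable for every $\gamma\in(0,\alpha)$. The Bernstein-type inequality $|e^{ihy}-1|\le 2^{1-\gamma}|hy|^\gamma$ applied to $p_t=\F^{-1}(\F p_t)$ then yields $p_t\in C^\gamma(\R)$ with a norm that depends on $t$ only through the constraint $\eps_y<t$, and is therefore uniform on $[t^\ast,1]$. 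Since $\Psi(x):=\int_{\xi-\delta}^x (\sigma^\ast)^{-1}(z)\,dz$ is a bi-Lipschitz diffeomorphism of $\R$ and $1/|\sigma^\ast|$ is Lipschitz, the identity $q_t(x)=p_t(\Psi(x))/|\sigma^\ast(x)|$ from \eqref{thm1_density} transfers Hölder regularity from $p_t$ to $q_t$ and delivers \eqref{eqn_cor_localHoeld_1}.

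For joint continuity, substituting $y=\Psi(x)$ in \eqref{thm1_density} gives the representation $\F p_t(y)=\EE[\varphi(X_t)\,e^{-iy\Psi(X_t)}]$. Almost-sure path continuity of $X$ and bounded convergence show that $t\mapsto\F p_t(y)$ is continuous for each $y$; the Fourier estimate above provides a $t$-uniform (on $[t^\ast,1]$) $L^1$-dominant, so dominated convergence in $p_t(z)=(2\pi)^{-1}\int e^{iyz}\F p_t(y)\,dy$ yields continuity of $t\mapsto p_t(z)$ uniformly in $z$, and the Hölder estimate in $z$ then upgrades this to joint continuity of $(t,z)\mapsto p_t(z)$; composition with $\Psi$ and division by $|\sigma^\ast|$ transfers this to $q_t$. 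The main obstacle is the jump term: it forces the bootstrap through Theorem~\ref{thm:localDensity} for an $L^2$-bound on the density of $X_{t-\eps_y}$ near $\xi$, and one must carefully track how that bound behaves as $\eps_y\to 0$ in order to secure the uniformity on $[t^\ast,1]$.
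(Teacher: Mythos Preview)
Your overall strategy matches the paper's proof closely: apply Corollary~\ref{cor1:fourierBound}, split the drift increment into a H\"older piece and a jump piece across $\xi$, bound each separately, convert the Fourier decay into H\"older regularity, then obtain joint continuity by dominated convergence on the Fourier side. The H\"older piece and the continuity argument are essentially identical to the paper's.

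There is, however, a genuine gap in your jump-term estimate. You claim that Cauchy--Schwarz against an $L^2$ density bound yields $\PP(E\cap\{\text{opposite sides}\})\le C\sqrt{\eps_y}$. It does not: with $g(x)=e^{-c|x-\xi|^2/\eps_y}$ one has $\|g\|_{L^2}\sim \eps_y^{1/4}$, so Cauchy--Schwarz gives only $C\,\eps_y^{1/4}\,\|\rho\|_{L^2}$. The jump contribution to $|\F p_t(y)|$ is then $|y|\cdot\eps_y^{5/4}\sim \log^{5/2}(|y|)\,|y|^{-3/2}$, which dominates the H\"older term whenever $\alpha>1/2$ and limits you to $\gamma<1/2$ rather than $\gamma<\alpha$. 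To reach the claimed $\sqrt{\eps_y}$ you would need an $L^\infty$ bound on the density near $\xi$, which you do not yet have; a bootstrap (first use the $\eps_y^{1/4}$ bound to get $\F p_t\in L^1$ and hence $p_t\in L^\infty$ uniformly on $[t^\ast,1]$, then rerun the estimate) would repair this, but you have not indicated it.

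The paper handles the jump term differently and avoids this issue. Instead of a sub-Gaussian tail plus Cauchy--Schwarz, it splits $\{\text{opposite sides}\}\subset\{|X_{t-\eps_y}-\xi|\le \eps_y^{1/2-\beta}\}\cup\{|X_s^\tau-X_{t-\eps_y}^\tau|\ge \eps_y^{1/2-\beta}\}$, bounds the second event by Markov with a large moment exponent $p$, and bounds the first via an $L^{\hat q}$ density norm obtained from the Hausdorff--Young inequality applied to the Fourier bound of Theorem~\ref{thm:localDensity} (with an auxiliary cutoff $\tilde\varphi$). Because $\hat p\in(1,2]$ can be taken arbitrarily close to $1$ (so $\hat q$ arbitrarily large), the first probability is $\lesssim \eps_y^{(1/2-\beta)/\hat p}$, which can be pushed as close to $\eps_y^{1/2}$ as desired. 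This flexibility delivers Fourier decay $\lesssim (1+|y|)^{-1-\gamma}$ for every $\gamma<\alpha$ in one pass, without a bootstrap.
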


\begin{rem}
	It should be pointed out that with the approach using difference operators one can derive similar results as \eqref{eqn_cor_localHoeld_1} in Corollary~\ref{cor:HoelderReg}. This can be shown using ideas of Theorem 4.1 and Section 2.2 in~\cite{rom18} for the case $\sigma\vert_{[\xi - \delta, \xi+\delta]} = 1$ and afterwards one applies a Lamperti-type transform as in the proof of Theorem~\ref{thm:localDensity}. However, it does not seem to be possible to apply Theorem 4.1 in~\cite{rom18} directly, since the proof presented there is only valid for $\beta \in (0,1]$ and not for $\beta \in (0, \infty)$ as stated therein.
\end{rem}

The rest of this article is structured as follows. After an introduction of some notation in Section~\ref{sect:notation} we derive Theorem~\ref{thm:localDensity} in Section~\ref{sect:proofMainResult} where we use the Fourier analytic approach together with a Lamperti-type transformation. Subsequently, in Section~\ref{sect:proofCorollaries} we prove Corollary~\ref{cor:HoelderReg}.

\section{Notation}\label{sect:notation}

We denote the set of Borel-measurable sets in $\R$ by $\mathcal{B}(\R)$ and we write $\overline{A}$ for the closure of a set $A \in \mathcal{B}(\R)$. The open ball around $x \in \R$ with radius $\delta \in (0, \infty)$ is given by $B_\delta(x) = (x-\delta, x+\delta)$. Moreover, we denote as usual $s \land t = \min(s,t)$ for $s,t \in \R$.
For a function $\varphi\colon A \rightarrow \R$ we set $\norm{\varphi}_{L^\infty(A)} = \sup_{x \in A} |\varphi(x)|$ where $A \in \mathcal{B}(\R)$ and for simplification we set $\norm{\cdot}_\infty = \norm{\cdot}_{L^\infty(\R)}$. The support of a continuous function $\varphi\colon \R \rightarrow \R$ is denoted by $\supp(\varphi) = \overline{\{x \in \R \colon \varphi(x) \ne 0\}}$.
The function space of infinitely often differentiable functions on some open real interval $I$ is denoted by $C^\infty(I)$ and $C^\infty_b(I)$ is the space of all functions in $C^\infty(I)$ which are bounded and whose derivatives of any order are bounded as well. 
Furthermore, let
\[
[\varphi]_{C^\alpha(\R)} = \sup_{\substack{x,y \in \R \\ x \neq y}} \frac{|\varphi(x) - \varphi(y)|}{|x - y|^\alpha}
\]
denote the $\alpha$-Hölder constant of a function $\varphi\colon \R \rightarrow \R$ for $\alpha \in (0,1]$. Therewith, the space of $(k+\alpha)$-Hölder continuous functions for $k \in \N \cup \{0\}$ and $\alpha \in (0, 1]$ consists of all $k$-times differentiable functions $\varphi\colon \R \rightarrow \R$ with
\[
\norm{\varphi}_{C^{k+\alpha}(\R)} = \sum_{i=0}^{k} \norm{\varphi^{(i)}}_\infty + [\varphi^{(k) }]_{C^\alpha(\R)} < \infty.
\]
Moreover, for a function $\varphi\colon \R \rightarrow \R$ we set
\[
\delta_\varphi\colon \R \rightarrow \R, \quad x \mapsto 
\begin{cases}
	\varphi'(x), &\text{if $\varphi$ is differentiable in $x$}, \\
	0, & \text{otherwise}.
\end{cases}
\]

We denote by $L^p(\R)$ with $p \in [1, \infty)$ the space of measurable functions $\varphi\colon \R \rightarrow \C$ which satisfy
\[
\norm{\varphi}_{L^p(\R)} = \Bigl(\int_{\R} |\varphi(x)|^p\, dx\Bigr)^{1/p}<\infty.
\]
For two measurable functions $f,g\colon\R\to\C$ we write $f=g\text{ a.e.}$ whenever $\lambda(\{f\neq g\}) = 0$ and $f\le g\text{ a.e.}$ whenever  $\lambda(\{f> g\}) = 0$.
A version of the Fourier transform of a function $\varphi \in L^p(\R)$ for $p \in [1,2]$ is denoted by $\F \varphi$ and for $p = 1$ it is given by
\[
\F\varphi = \F[\varphi] = \int_\R e^{\bi z(\cdot)}\, f(z)\, dz\, \text{ a.e.}
\]
where $\bi$ denotes the imaginary unit.

Moreover, the Fourier transform of a finite signed measure $\mu$ on $(\R, \mathcal{B}(\R))$ is given by
\[
\F\mu = \F[\mu] = \int_\R e^{\bi z(\cdot)}\, \mu(dz).
\]

For a measure $\mu$ on $(\R, \mathcal{B}(\R))$ and a function $\varphi\colon \R \rightarrow \R$ which is integrable with respect to $\mu$ we write $\varphi(x) \, \mu(dx)$ for the signed measure given by
\[
\Big( \varphi(x) \, \mu(dx)\Big) (A) = \int_A \varphi(x) \, \mu(dx), \qquad A \in \mathcal{B}(\R).
\]

For a stopping time $\tau$ and an adapted process $X$ on some filtered probability space we write 
\[
X^\tau_\cdot = X_{\cdot \land \tau}
\]
for the stopped process.

\section{Proof of Theorem~\ref{thm:localDensity}}\label{sect:proofMainResult}

In the following let $(\Omega,\mathcal A, \PP)$ be a probability space and $W\colon [0,1] \times \Omega \rightarrow \R$ be a standard Brownian motion. We start with the proof of Theorem~\ref{thm:localDensity} for the special case $\sigma\vert_{[\xi-\delta, \xi+\delta]} = 1$ where we bound the Fourier transform of the localized measure with the help of an Euler-type approximation. Afterwards, we present a proof for Theorem~\ref{thm:localDensity} which essentially consists of two steps. First, we use Theorem~\ref{thm:localDensity} for the case that $\sigma$ is locally constant. Then the claim follows by applying a Lamperti-type transformation. We show in the following proposition a suitable bound for the Fourier transform of the localized measure for the case $\sigma\vert_{[\xi-\delta, \xi+\delta]} = 1$ where we use ideas from the proof of Theorem 2.1 in \cite{FP10}. 

 \begin{prop}\label{prop:fourierBound}
 	Let $\mu, \sigma\colon \R \rightarrow \R$ be measurable functions and assume that there exist $\xi \in \R$, $\delta \in (0,\infty)$ such that $\mu\vert_{[\xi - \delta, \xi+\delta]}$ is bounded and $\sigma \vert_{[\xi - \delta, \xi+\delta]} = 1$. Let $X$ be a strong solution of the SDE~\eqref{sde0}, $\delta_0 \in (0, \delta)$ and $\varphi\colon \R \rightarrow \R$ be a differentiable function which has a Lipschitz continuous derivative $\varphi'$ and which satisfies $\supp(\varphi) \subset B_{\delta - \delta_0}(\xi)$. Then there exists a constant $c = c(\norm{\mu}_{L^\infty([\xi - \delta, \xi+\delta])}, \norm{\varphi}_{C^2(\R)}, \delta_0)\in (0, \infty)$ such that for all $t \in (0,1]$ and $\varepsilon \in (0, t)$ it holds
 	\begin{equation*}
 		\begin{aligned}
 			&|\F[\varphi(x) \, \PP^{X_t}(dx)] (y)| \\
 			& \qquad  \le c \Big[(1 +  \eps |y|)e^{-(\eps y^2)/2} + \eps + (1+ |y|) \EE \bigl [1_{\{ \forall s \in [t-\eps, t]:|X_s - \xi| \le \delta \}} \Big|\int_{t - \eps}^t \mu(X_s) - \mu(X_{t- \eps}) \, ds\Big|\bigr ] \Big]
 		\end{aligned}
 	\end{equation*}
 	where $y \in \R$.
 \end{prop}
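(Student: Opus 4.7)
The plan is to adapt the Fourier-analytic strategy of Theorem~2.1 in~\cite{FP10}: approximate $X_t$ by an Euler-type random variable $Z$ that is conditionally Gaussian, then exploit this structure via a Taylor expansion of $\varphi$ around the Euler drift to extract the decay $e^{-\eps y^2/2}$ in $y$. Concretely, set $\tilde\mu := \mu \cdot 1_{[\xi-\delta,\xi+\delta]}$ (globally bounded by $\norm{\mu}_{L^\infty([\xi-\delta,\xi+\delta])}$) and define
\[
Z := X_{t-\eps} + \tilde\mu(X_{t-\eps})\,\eps + (W_t - W_{t-\eps}).
\]
On the event $A := \{\forall s \in [t-\eps,t]\colon |X_s - \xi| \le \delta\}$ one has $\tilde\mu(X_{t-\eps}) = \mu(X_{t-\eps})$ and $\sigma(X_s) \equiv 1$ on $[t-\eps,t]$, hence $X_t - Z = \int_{t-\eps}^t (\mu(X_s) - \mu(X_{t-\eps}))\,ds =: R$. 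This motivates the decomposition
\[
\F[\varphi(x)\,\PP^{X_t}(dx)](y) = \EE[\varphi(Z)e^{\bi y Z}] + \EE[1_A(\varphi(X_t)e^{\bi y X_t} - \varphi(Z)e^{\bi y Z})] + \EE[1_{A^c}(\varphi(X_t)e^{\bi y X_t} - \varphi(Z)e^{\bi y Z})],
\]
whose three summands target the three terms on the right-hand side of the claim.

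For the main term $\EE[\varphi(Z)e^{\bi y Z}]$, I would condition on $\Fc_{t-\eps}$. Given $\Fc_{t-\eps}$, $Z = a + N$ with $a = X_{t-\eps} + \tilde\mu(X_{t-\eps})\eps$ and $N \sim \mathcal{N}(0,\eps)$ independent. Taylor expand $\varphi(a+n) = \varphi(a) + n\varphi'(a) + r(a,n)$ with $|r(a,n)| \le \tfrac12 [\varphi']_{C^1(\R)}\,n^2$ (valid since $\varphi'$ is Lipschitz), and use the Gaussian identities $\EE[e^{\bi y N}] = e^{-\eps y^2/2}$, $\EE[N e^{\bi y N}] = \bi\,\eps\,y\,e^{-\eps y^2/2}$, $\EE[N^2] = \eps$. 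This yields $|\EE[\varphi(Z)e^{\bi y Z}]| \le c\,[(1 + \eps|y|)e^{-\eps y^2/2} + \eps]$, matching the first two terms of the bound.

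For the $A$-part of the error, the Lipschitz estimates $|e^{\bi y X_t} - e^{\bi y Z}| \le |y|\,|R|$ and $|\varphi(X_t) - \varphi(Z)| \le \norm{\varphi'}_\infty\,|R|$ produce $(|y|\norm{\varphi}_\infty + \norm{\varphi'}_\infty)\,\EE[1_A|R|]$, giving the $(1+|y|)$ expectation term directly. For the $A^c$-part I would bound by $\norm{\varphi}_\infty\bigl(\PP[A^c \cap \{\varphi(X_t) \ne 0\}] + \PP[A^c \cap \{\varphi(Z) \ne 0\}]\bigr)$ and show each probability is $\le c\,\eps$. On $\{\varphi(X_t) \ne 0\} \cap A^c$, continuity of $X$ and the intermediate value theorem furnish a last time $\rho \in [t-\eps,t)$ with $|X_\rho - \xi| = \delta$; on $[\rho,t]$ the path stays in $\overline{B_\delta(\xi)}$, so $\sigma(X_\cdot) = 1$ there and the bound $\delta_0 \le |X_t - X_\rho| \le \eps\,\norm{\mu}_{L^\infty([\xi-\delta,\xi+\delta])} + |W_t - W_\rho|$ forces $\sup_{s \in [t-\eps,t]}|W_t - W_s| \ge \delta_0/2$ once $\eps$ is small, and the reflection principle yields probability $\le c\,\eps$. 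For $\{\varphi(Z) \ne 0\} \cap A^c$ I split on whether $X_{t-\eps}$ lies in $B_{\delta-\delta_0/2}(\xi)$, in $B_\delta(\xi)\setminus B_{\delta-\delta_0/2}(\xi)$, or outside $B_\delta(\xi)$; the first case again reduces to a reflection estimate on the BM path, while the latter two force $|W_t - W_{t-\eps}|$ to exceed a positive constant in order for $Z$ to land in the support of $\varphi$, giving a Gaussian tail bound.

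The main obstacle is this last estimate on $\PP[A^c \cap \{\varphi(Z) \ne 0\}]$: with no global control on $\mu$ outside $[\xi-\delta,\xi+\delta]$, one cannot afford to keep $\mu(X_{t-\eps})$ in the definition of $Z$ when $X_{t-\eps}$ is far away. Truncating to $\tilde\mu$ is exactly what makes $Z$ tractable there while leaving it unchanged on $A$, so neither the main term nor the $A$-error is affected. A minor technical point is that the reflection and Gaussian estimates provide exponentially decaying probabilities of the form $c_1 e^{-c_2/\eps}$; since $e^{-c_2/\eps} \le c\,\eps$ for all $\eps \in (0,1]$, this is absorbed cleanly into the $\eps$ term of the claim.
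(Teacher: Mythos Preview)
Your argument is correct and shares the paper's core idea---approximate $X_t$ by a conditionally Gaussian Euler step and harvest $e^{-\eps y^2/2}$ from the Gaussian characteristic function---but the implementation differs in a few instructive ways. First, you truncate the drift to $\tilde\mu$ in $Z$, whereas the paper keeps $\mu$; your choice neatly sidesteps any issue with $\mu$ being uncontrolled outside $[\xi-\delta,\xi+\delta]$ and makes the treatment of $\{\varphi(Z)\neq 0\}\cap A^c$ transparent. Second, you Taylor-expand $\varphi(Z)$ around the drifted point $a=X_{t-\eps}+\tilde\mu(X_{t-\eps})\eps$, while the paper expands $\varphi(X_t)$ around $X_{t-\eps}$ and then splits $X_t-X_{t-\eps}=(X_t-Z)+(Z-X_{t-\eps})$, using the separate identity for $\EE[(Z-X_{t-\eps})e^{\bi y Z}\mid X_{t-\eps}]$; both routes land on the same $(1+\eps|y|)e^{-\eps y^2/2}+\eps$ bound. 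Third, for the complement $A^c$ you use pathwise last-exit arguments combined with the reflection principle and Gaussian tails (exploiting $\sigma\equiv 1$ directly), whereas the paper works with the stopped process, Markov's inequality on its $p$-th moments, and a chain of three auxiliary stopping times $\hat\tau^{(1)},\hat\tau^{(2)},\hat\tau^{(3)}$ to handle the case $X_t\in\supp(\varphi)$ but $X_{t-\eps}\notin B_{\delta-\delta_0/2}(\xi)$. Your reflection route is arguably more elementary here and yields the sharper exponential bound $e^{-c/\eps}$, which you correctly absorb into $c\,\eps$; the paper's moment approach is slightly more robust in that it would survive a non-constant but bounded $\sigma$ on the interval without change. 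One minor remark: your reflection estimates need $\eps$ small relative to $\delta_0/\norm{\mu}_{L^\infty}$, but as you implicitly note, for larger $\eps$ the trivial bound $\|\varphi\|_\infty$ is already $\le c\,\eps$.
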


\begin{proof}
	For $t \in (0,1]$ and $\eps \in (0, t)$ we define inspired by Section 3 in \cite{debussche2013} and inequality (2.11) in~\cite{rom18} an Euler-type approximation for $X_t$ by
	\begin{equation*}
		Z_{t, \eps} := X_{t -\eps} + \eps \mu(X_{t-\eps}) + \sigma(X_{t-\eps})(W_t - W_{t-\eps}).
	\end{equation*}
	
	Let $t \in (0,1]$ and $\eps \in (0, t)$. Throughout this proof let $c_1,c_2,\dots\in (0,\infty)$ denote positive constants,
	which do only depend on $\norm{\mu}_{L^\infty([\xi - \delta, \xi+\delta])}, \norm{\varphi}_{C^2(\R)}, \delta_0$ and which neither depend on $t$ nor on $\eps$.	\pagebreak[2]
	\\
	\textit{Regularity of conditional characteristic function:}
	We start with the derivation of bounds for the conditional characteristic function of $Z_{t, \eps}$. Let $y \in \R$. Then it holds similar as in the proof of Theorem 2.1 in \cite{FP10} for $\PP^{X_{t - \eps}}$-almost all $x \in \R$
	\begin{equation} \label{prop1_1}
		\begin{aligned}
			|\EE[e^{\bi y Z_{t, \eps}} | X_{t-\eps} = x]| &= |e^{\bi yx} \cdot e^{\bi y\eps \mu(x)} \cdot \EE[e^{\bi y \sigma(x) (W_t - W_{t- \eps})}]| = e^{-y^2 \sigma^2(x) \eps / 2}.
		\end{aligned}
	\end{equation}
	Moreover, using \eqref{prop1_1} and the fact that for $N_\eps \sim N(0, \eps)$ it holds 
	\[
	\EE[N_\eps e^{\bi \hat{y} N_{\eps}}] = \bi \eps \hat{y} e^{- \hat{y}^2 \eps/2}, \qquad \hat{y} \in \R,
	\]
	we get for $\PP^{X_{t - \eps}}$-almost all $x \in \R$
	\begin{equation}\label{prop1_2}
		\begin{aligned}
			&|\EE[(Z_{t, \eps} - X_{t-\eps}) e^{\bi y Z_{t, \eps}} | X_{t-\eps} = x]|\\
			& \qquad \qquad = |\eps \mu(x)\EE[e^{\bi y Z_{t, \eps}} | X_{t-\eps} = x] + \sigma(x) \EE[(W_t - W_{t-\eps})e^{\bi y Z_{t, \eps}} | X_{t-\eps} = x]| \\
			&\qquad \qquad \le  \eps |\mu(x)| |\EE[e^{\bi y Z_{t, \eps}} | X_{t-\eps} = x]| + |\sigma(x) \cdot e^{\bi yx} \cdot e^{\bi y\eps \mu(x)} \cdot \EE[(W_t - W_{t-\eps}) e^{\bi y \sigma(x)(W_t - W_{t-\eps})}]| \\
			& \qquad \qquad \le \eps |\mu(x)|e^{-y^2 \sigma^2(x) \eps / 2} + \sigma^2(x) \eps |y| e^{- y^2 \sigma^2(x)\eps/2} \\
			& \qquad \qquad = \bigl (\eps |\mu(x)| + \eps |y| \sigma^2(x)\bigr) e^{- y^2 \sigma^2(x)\eps/2}.
		\end{aligned}
	\end{equation}
	\\
	\textit{Localization:}
	Now we localize the problem similar to the proof of Theorem 4.1 in \cite{rom18} and Section~5.1 in \cite{Hayashi2012}. Therefore, we use
	\[
	\tau_{t, \eps} := \inf\{ s \in [t-\eps, t] \colon X_s \notin (\xi - \delta, \xi + \delta)\}.
	\] 
	
	Note that since $\mu$ is bounded on $[\xi - \delta, \xi + \delta]$ and since $\sigma\vert_{[\xi - \delta, \xi + \delta]} = 1$, there exists for every $p \in [1, \infty)$ a constant $d_p = d_p(\norm{\mu}_{L^\infty([\xi - \delta, \xi+\delta])})\in (0, \infty)$ such that
	\begin{equation}\label{prop1_6}
		\begin{aligned}
			\EE[\sup_{s \in [t-\eps, t]} |X_s^{\tau_{t, \eps}} - X_{t-\eps}^{\tau_{t, \eps}}|^p] &= \EE \Big [\sup_{s \in [t-\eps, t]} \Big|\int_{t-\eps}^{s \land \tau_{t, \eps}} \mu(X_u) \, du + \int_{t-\eps}^{s \land \tau_{t, \eps}} \sigma(X_u) \, dW_u\Big|^p\Big ]\le d_p\eps^{p/2}.
		\end{aligned}
	\end{equation}
	
	Then by \eqref{prop1_6} we have for $p \in [1, \infty)$
	\begin{equation} \label{prop1_7a}
		\begin{aligned}
			&\PP(\{X_{t-\eps} \in B_{\delta - \delta_0/2}(\xi)\} \cap \{\tau_{t, \eps} < t\}) \\
			& \qquad \qquad \le \PP(\{ |X_t^{\tau_{t, \eps}} - X_{t-\eps}^{\tau_{t, \eps}}| \ge \delta_0 / 2\})\le \big( 2 / \delta_0 \big)^p \EE[|X_t^{\tau_{t, \eps}} - X_{t-\eps}^{\tau_{t, \eps}}|^p] \le d_p \big( 2 / \delta_0 \big)^p \eps^{p/2}.
		\end{aligned}
	\end{equation}
	Similar to the proof of Theorem 4.1 in~\cite{rom18} we consider stopping times
	\begin{equation*}
		\begin{aligned}
			\hat{\tau}_{t, \eps}^{(1)} &:= \inf\{s \in [t-\eps, t] \colon X_s \in [\xi-\delta + \frac{3\delta_0}{4}, \xi + \delta - \frac{3\delta_0}{4}]\}, \\
			\hat{\tau}_{t, \eps}^{(2)} &:= \inf\{s \in [t-\eps, t] \colon s \geq \hat{\tau}_{t, \eps}^{(1)} \,\text{and}\,  X_s \notin (\xi-\delta , \xi + \delta)\}, \\
			\hat{\tau}_{t, \eps}^{(3)} &:= \inf\{s \in [t-\eps, t] \colon  s \geq \hat{\tau}_{t, \eps}^{(1)} \,\text{and}\, X_s \in [\xi-\delta + \frac{7\delta_0}{8}, \xi + \delta - \frac{7\delta_0}{8}]\}
		\end{aligned}
	\end{equation*}
	to obtain similar to \eqref{prop1_7a} for every $p \in [1, \infty)$ a constant $\hat{d}_p = \hat{d}_p(\norm{\mu}_{L^\infty([\xi - \delta, \xi+\delta])}) \in (0, \infty)$ with
	\begin{equation}\label{prop1_7b}
		\begin{aligned}
			&\PP(\{X_t \in [\xi - \delta + \delta_0, \xi + \delta - \delta_0]\} \cap \{X_{t-\eps} \in \R \setminus B_{\delta - \delta_0/2}(\xi)\}) \\
			&\qquad \qquad \le \PP(\{X_t \in [\xi - \delta + \delta_0, \xi + \delta - \delta_0]\} \cap \{\hat{\tau}_{t, \eps}^{(1)} < t\}) \\
			&\qquad \qquad \le \PP(\{\hat{\tau}_{t, \eps}^{(1)} < t\} \cap \{\hat{\tau}_{t, \eps}^{(2)} < t\} ) + \PP(\{\hat{\tau}_{t, \eps}^{(1)} < t\} \cap \{\hat{\tau}_{t, \eps}^{(2)} \ge t\} \cap \{\hat{\tau}_{t, \eps}^{(3)} < t\}) \\
			&\qquad \qquad \le \PP(|X_{t \land \hat{\tau}_{t, \eps}^{(2)}} - X_{t \land \hat{\tau}_{t, \eps}^{(1)}}| \ge \delta_0 / 4) + \PP(|X_{t \land \hat{\tau}_{t, \eps}^{(2)} \land \hat{\tau}_{t, \eps}^{(3)}} - X_{t \land \hat{\tau}_{t, \eps}^{(1)}}| \ge \delta_0 / 8)\\
			&\qquad \qquad \le \hat{d}_p \big( 8 / \delta_0\big)^p  \eps^{p/2}.
		\end{aligned}
	\end{equation} 
	We work in the following on the set
	\[
	A_{t, \eps} := \{X_{t-\eps} \in B_{\delta - \delta_0/2}(\xi)\} \cap \{ \tau_{t, \eps} \ge t\}
	\]
	on which the solution stays within the interval where the coefficients behave nice. For bounded complex valued random variables $U$ with $|U| \le K$ where $K \in (0, \infty)$ we can restrict to the set $A_{t, \eps}$ using \eqref{prop1_7a}, \eqref{prop1_7b} and the fact that $\varphi\vert_{\R \setminus [\xi - \delta + \delta_0, \xi + \delta - \delta_0]} = 0$ via
	\begin{equation}\label{prop1_7c}
		\begin{aligned}
			|\EE[1_{A_{t, \eps}^\mathrm{C}}U \varphi(X_{t - \eps})]| &\le K \cdot \| \varphi \|_\infty \cdot \PP(\{X_{t-\eps} \in B_{\delta - \delta_0/2}(\xi)\}  \cap A_{t, \eps}^\mathrm{C}) \le K \cdot \| \varphi \|_\infty  \cdot d_p \big( 2 / \delta_0 \big)^p \eps^{p/2}, \\
			|\EE[1_{A_{t, \eps}^\mathrm{C}} U \varphi(X_{t})]| &\le K \cdot \| \varphi \|_\infty \cdot \PP(\{X_t \in [\xi - \delta + \delta_0, \xi + \delta - \delta_0]\} \cap A_{t, \eps}^\mathrm{C}) \\
			&\le  K \cdot \| \varphi \|_\infty \cdot \Big( \hat{d}_p \big( 8 / \delta_0\big)^p  \eps^{p/2} + d_p \big( 2 / \delta_0 \big)^p \eps^{p/2} \Big),
		\end{aligned}
	\end{equation}
	where $p \in [1, \infty)$.
	\\
	\textit{Bound for the Fourier transform:}
	Let $y \in \R$. Exploiting \eqref{prop1_7c} we obtain
	\begin{equation*}
		\begin{aligned}
			|\F[\varphi(x) \, \PP^{X_t}(dx)] (y)|  &= |\int_\R e^{\bi yx} \varphi(x) \, \PP^{X_t}(dx)| = |\EE[e^{\bi y X_t} \varphi(X_t)]| \le |\EE[1_{A_{t,\eps}} e^{\bi y X_t} \varphi(X_t)]| + c_1 \eps \\
			&  = |\EE[1_{A_{t,\eps}} e^{\bi y Z_{t, \eps}} \varphi(X_t)] + \EE[1_{A_{t,\eps}}(e^{\bi y X_t} - e^{\bi y Z_{t, \eps}})\varphi(X_t)] | + c_1 \eps \\
			&  \le |\EE[1_{A_{t,\eps}} e^{\bi y Z_{t, \eps}} \varphi(X_t)]| + |y|\EE[1_{A_{t,\eps}}|(X_t - Z_{t, \eps})\varphi(X_t)|] + c_1 \eps.
		\end{aligned}
	\end{equation*}
	Next, we approximate $\varphi(X_t)$ with a Milstein-type step to get
	\begin{equation*}
		\begin{aligned}
			|\F[\varphi(x) \, \PP^{X_t}(dx)] (y)| & \le |\EE[1_{A_{t,\eps}} e^{\bi y Z_{t, \eps}} (\varphi(X_{t-\eps}) + \varphi'(X_{t-\eps})(X_t - X_{t-\eps}))] \\
			&  \qquad \qquad + \EE[1_{A_{t,\eps}} e^{\bi y Z_{t, \eps}} (\varphi(X_t) - \varphi(X_{t-\eps}) - \varphi'(X_{t-\eps})(X_t - X_{t-\eps}))]| \\
			& \qquad \qquad + |y|\EE[1_{A_{t,\eps}} |(X_t - Z_{t, \eps})\varphi(X_t)|] + c_1 \eps.
		\end{aligned}
	\end{equation*}
	An application of the standard inequality
	\[
	|\varphi(z_1) - \varphi(z_2) - \varphi'(z_2)(z_1 - z_2)| = |\int_{z_2}^{z_1} \varphi'(v) - \varphi'(z_2) \, dv| \le [\varphi']_{C^1(\R)} |z_1 - z_2|^2, \qquad z_1, z_2 \in \R,
	\]
	together with \eqref{prop1_6} therefore shows that 
	\begin{equation}\label{prop1_3}
		\begin{aligned}
			&|\F[\varphi(x) \, \PP^{X_t}(dx)] (y)| \\
			& \qquad  \le |\EE[1_{A_{t,\eps}} e^{\bi y Z_{t, \eps}} (\varphi(X_{t-\eps}) + \varphi'(X_{t-\eps})(X_t - X_{t-\eps}))]| + c_2 \EE[1_{A_{t,\eps}}|X_t - X_{t-\eps}|^2]  \\
			& \qquad \qquad \qquad  + |y|\EE[1_{A_{t,\eps}}|(X_t - Z_{t, \eps})\varphi(X_t)|] + c_1 \eps \\
				& \qquad =   |\EE[1_{A_{t,\eps}} e^{\bi y Z_{t, \eps}} (\varphi(X_{t-\eps}) + \varphi'(X_{t-\eps})(X_t - Z_{t, \eps} + Z_{t, \eps} - X_{t-\eps}))]| \\
			& \qquad \qquad \qquad  + c_2 \EE[1_{A_{t,\eps}}|X^{\tau_{t, \eps}}_t - X^{\tau_{t, \eps}}_{t-\eps}|^2] + |y|\EE[ 1_{A_{t,\eps}}|(X_t - Z_{t, \eps})\varphi(X_t)|] + c_1 \eps\\
			& \qquad \le |\EE[1_{A_{t,\eps}} e^{\bi y Z_{t, \eps}} (\varphi(X_{t-\eps}) + \varphi'(X_{t-\eps})(Z_{t, \eps} - X_{t-\eps}))]| + c_3 \eps + c_4(1 + |y|) \EE[1_{A_{t,\eps}}|X_t - Z_{t, \eps}|].
		\end{aligned}
	\end{equation}
	Now we have because of $\varphi\vert_{\R \setminus (\xi - \delta + \delta_0, \xi + \delta - \delta_0)} = 0$ with \eqref{prop1_7a}
	\begin{equation}\label{prop1_4}
		\begin{aligned}
			&|\EE[1_{A_{t, \eps}^\mathrm{C}}e^{\bi y Z_{t, \eps}} \varphi'(X_{t-\eps})(Z_{t, \eps} - X_{t-\eps})]| \\
			& \qquad \le \EE[1_{A_{t, \eps}^\mathrm{C}}|\varphi'(X_{t-\eps})|\cdot \sup_{x \in B_\delta(\xi)} |\eps \mu(x) + \sigma(x)(W_t - W_{t-\eps})|] \\
			&  \qquad \le c_5 \sqrt{\PP(\{X_{t-\eps} \in B_{\delta - \delta_0/2}(\xi)\} \cap \{\tau_{t, \eps} < t\}) }  \cdot \sqrt{\EE[\sup_{x \in B_\delta(\xi)} | \eps \mu(x) + \sigma(x)(W_t - W_{t-\eps})|^2 ]} \\
			& \qquad \le c_6 \eps.
		\end{aligned}
	\end{equation}
	This together with \eqref{prop1_7c} and \eqref{prop1_3} yields
	\begin{equation*}
		\begin{aligned}
			&|\F[\varphi(x) \, \PP^{X_t}(dx)] (y)| \\
			& \qquad \le |\EE[ e^{\bi y Z_{t, \eps}} (\varphi(X_{t-\eps}) + \varphi'(X_{t-\eps})(Z_{t, \eps} - X_{t-\eps}))]| + c_7 \eps + c_4(1+|y|)\EE[1_{A_{t,\eps}}|X_t - Z_{t, \eps}|].
		\end{aligned}
	\end{equation*}
	Combining \eqref{prop1_1} and \eqref{prop1_2}, we thus get because of $\sigma\vert_{[\xi-\delta, \xi+\delta]} = 1$ and $\varphi\vert_{\R \setminus B_\delta(\xi)} = 0$
	\begin{equation*}
		|\F[\varphi(x) \, \PP^{X_t}(dx)] (y)| \le c_8(1 + \eps + \eps |y|) e^{-y^2\eps/2} + c_7 \eps + c_4(1+|y|)  \EE[1_{A_{t,\eps}} |X_t - Z_{t, \eps}|].
	\end{equation*}
		Observing that because of $\sigma\vert_{[\xi - \delta, \xi+\delta]} = 1$ it holds for $\PP$-almost all $\omega \in \{\tau_{t, \eps} \ge t\}$
	\begin{equation*}
		|X_t - Z_{t, \eps}|(\omega) = \big|\int_{t-\eps}^t \mu(X_s(\omega)) - \mu(X_{t-\eps}(\omega)) \, ds \big|,
	\end{equation*}
	the claim follows.
\end{proof}

The general statement of Theorem~\ref{thm:localDensity} can now be reduced to the special case from Proposition~\ref{prop:fourierBound}
which is done in the following proof.

\begin{proof}[Proof of Theorem~\ref{thm:localDensity}]
	Theorem~\ref{thm:localDensity} is basically a consequence of an application of a Lamperti-type transform and Proposition~\ref{prop:fourierBound}. We introduce at first similar to Corollary 2.3 in~\cite{Banos17} and Section~6.2 in \cite{Hayashi2014} the Lamperti-type transform by
	\begin{equation}\label{prop1_H}
		H\colon \R \rightarrow \R, \quad x \mapsto \int_{\xi - \delta}^x \frac{1}{\sigma^\ast(z)} \, dz.
	\end{equation}
	Since $\sigma$ is Lipschitz continuous on $[\xi - \delta, \xi+\delta]$, $\sigma^\ast$ is Lipschitz continuous. Using the assumption $\inf_{x \in B_\delta(\xi)} |\sigma(x)| > 0$ one thus obtains that the derivative $H' = \frac{1}{\sigma^\ast}$ of $H$ is Lipschitz continuous and that $H$ is a strictly monotonic and bi-Lipschitz continuous function. Hence, an application of the It\^{o} formula, see e.g.~\cite[Problem 3.7.3]{ks91}, yields that $Y := H(X)$ is a strong solution of the SDE
	\begin{equation*}
		\begin{aligned}
			dY_t &= \mu^H(Y_s) \, ds + \sigma^H(Y_s) \, dW_s,\\
			Y_0&= H(x_0),
		\end{aligned}
	\end{equation*}
	where
	\[
	\mu^H = \Big(\frac{\mu}{\sigma^\ast} - \frac{\sigma^2 \delta_{\sigma^\ast}}{2(\sigma^\ast)^2}\Big)\circ H^{-1}  \quad \text{and} \quad \sigma^H =  \frac{\sigma}{\sigma^\ast}\circ H^{-1}.
	\]
	We want to apply Proposition~\ref{prop:fourierBound} to $Y$ and $\varphi \circ H^{-1}$ and we therefore check the assumptions of the proposition. By the continuity and the strict monotonicity of $H$, there are a $\xi^H \in \R$ and a $\delta^H \in (0, \infty)$ such that $H(B_\delta(\xi)) = B_{\delta^H}(\xi^H)$ and it holds $\sigma^H(y)=\frac{\sigma}{\sigma^\ast} \circ H^{-1}(y)= 1$ for all \mbox{$y \in [\xi^H - \delta^H, \xi^H+\delta^H] = \overline{H(B_{\delta}(\xi))}$}. The boundedness of $\mu^{H}\vert_{[\xi^H - \delta^H, \xi^H+\delta^H]}$ is satisfied since it holds for $\mu\vert_{[\xi - \delta, \xi+\delta]}$ and $\sigma\vert_{[\xi - \delta, \xi+\delta]}$, since $ [\xi^H - \delta^H, \xi^H+\delta^H] = \overline{H(B_{\delta}(\xi))}$, $\sigma^\ast$ is Lipschitz continuous and $\sigma^\ast$ is bounded away from zero. Now the derivative $(\varphi \circ H^{-1})' = \frac{\varphi'\circ H^{-1}}{H' \circ H^{-1}} = (\varphi' \sigma^\ast) \circ H^{-1}$ is Lipschitz continuous since $\varphi', \sigma^\ast$ and $H^{-1}$ are Lipschitz continuous and since $\varphi$ has a bounded support. The continuity and strict monotonicity of $H$ also yield the existence of a \mbox{$\delta_0^H \in (0, \delta^H)$} such that $\supp(\varphi \circ H^{-1}) = H(\supp(\varphi)) \subset H(B_{\delta - \delta_0}(\xi)) \subset B_{\delta^H - \delta^H_0}(\xi^H)$. By Proposition~\ref{prop:fourierBound} there thus exists a constant \mbox{$c_1 = c_1(\norm{\mu}_{L^\infty([\xi - \delta, \xi+\delta])}, \norm{\sigma^\ast}_{C^1(\R)}, l_\sigma,\norm{\varphi}_{C^2(\R)}, \delta_0) \in (0, \infty)$} such that for all $t \in (0,1]$, $\varepsilon \in (0, t)$ and $y \in \R$ it holds
	\begin{equation*}
		\begin{aligned}
			&|\F[(\varphi \circ H^{-1})(x) \, \PP^{Y_t}(dx)] (y)|  \\
			&  \le c_1 \Big[(1 +  \eps |y|)e^{-(\eps y^2)/2} + \eps + (1+ |y|) \EE \bigl [1_{\{ \forall s \in [t-\eps, t]:|Y_s - \xi^H| \le \delta^H \}} \Big|\int_{t - \eps}^t \mu^H(Y_s) - \mu^H(Y_{t- \eps}) \, ds\Big|\bigr ] \Big].
		\end{aligned}
	\end{equation*}
	Using $B_{\delta^H}(\xi^H) = H(B_\delta(\xi))$ and plugging in the definitions of $\mu^H$ and $\sigma^\ast$ we thus get for all $t \in (0,1]$ and $\eps \in (0,t)$
	\begin{equation}\label{thm1_1}
		\begin{aligned}
			&|\F[(\varphi \circ H^{-1})(x) \, \PP^{Y_t}(dx)] (y)|  \\
			& \qquad \le c_1 \Big[(1 +  \eps |y|)e^{-(\eps y^2)/2} + \eps \\
			&\qquad \qquad  + (1+ |y|) \EE \bigl [1_{\{ \forall s \in [t-\eps, t]:|X_s - \xi| \le \delta \}} \Big|\int_{t - \eps}^t \Big(\frac{\mu}{\sigma^\ast} - \frac{\delta_{\sigma^\ast}}{2}\Big)(X_s) - \Big(\frac{\mu}{\sigma^\ast} - \frac{\delta_{\sigma^\ast}}{2}\Big)(X_{t- \eps}) \, ds\Big|\bigr ] \Big].
		\end{aligned}
	\end{equation}
	
	Let $t \in (0,1]$ and $\eps \in (0,t)$. In the following let $c_2,c_3,\dots\in (0,\infty)$ denote positive constants,
	which may depend on $\norm{\mu}_{L^\infty([\xi - \delta, \xi+\delta])}, \norm{\sigma^\ast}_{C^1(\R)}, l_\sigma,\norm{\varphi}_{C^2(\R)}, \delta_0$ and which neither depend on $t$ nor on $\eps$. Since $\frac{\mu}{\sigma^\ast} - \frac{\delta_{\sigma^\ast}}{2}$ is bounded on the set $[\xi - \delta, \xi + \delta]$ it holds with \eqref{thm1_1} for $y \in \R$
	\begin{equation*}
		|\F[(\varphi \circ H^{-1})(x) \, \PP^{Y_t}(dx)] (y)|  \le c_2 \big[ (1 + \eps |y|)e^{-(\eps y^2)/2} + \eps + (1 + |y|) \eps \big].
	\end{equation*}
	Choosing $\eps = \frac{\log^2(|y|)}{y^2}$ for $y \in \R \setminus \{0\}$ with $\frac{\log^2(|y|)}{y^2} < t$, the fact $|\F[(\varphi \circ H^{-1})(x) \, \PP^{Y_t}(dx)]| \le c_3$ thus yields $\F[(\varphi \circ H^{-1})(x) \, \PP^{Y_t}(dx)] \in L^2(\R)$ and hence the measure $(\varphi \circ H^{-1})(x) \, \PP^{Y_t}(dx)$ has a Lebesgue-density $p_t \in L^2(\R)$ with $\F p_t = \F[(\varphi \circ H^{-1})(x) \, \PP^{Y_t}(dx)]$ a.e., see e.g. Theorem 3.12 in \cite{Wolff2003}. With integration by substitution we now have 
	\[
	\varphi(x) \, \PP^{X_t}(dx) = p_t(H(x)) |H'(x)| \, dx =  p_t \bigl(\int_{\xi-\delta}^x \frac{1}{\sigma^\ast(z)} \, dz \bigr) \, \big/ \, |\sigma^\ast(x)| \, dx.
	\]
	Together with the fact that $\F p_t (y)= \F[(\varphi \circ H^{-1})(x) \, \PP^{Y_t}(dx)] (y) = \EE[e^{i y Y_t}(\varphi \circ H^{-1})(Y_t)]$ for $\lambda$-almost all $y \in \R$ and \eqref{thm1_1} this shows the claim.
\end{proof}

\section{Proof of Corollary~\ref{cor:HoelderReg}}\label{sect:proofCorollaries}

In this section we apply Corollary~\ref{cor1:fourierBound} to derive Corollary~\ref{cor:HoelderReg}. We will use the following classical result for the connection of the Fourier transform of a function and its Hölder regularity. For convenience of the reader we provide a short proof of the statement.

\begin{lemma}\label{lem:FourierHoelder}
	Let $f \in L^2(\R)$, $\gamma \in (0,1)$ and $c \in (0, \infty)$ such that for $\lambda$-almost all $y \in \R$ it holds
	\[
	|\F f (y)| \le \frac{c}{(1 + |y|)^{1 + \gamma}}.
	\]
	Let $\widetilde{f} = \frac{1}{2\pi} \int_\R e^{-\bi z(\cdot)}\, \F f(z)\, dz$. Then it holds $f = \widetilde{f}$ a.e. and there exists a constant $d_\gamma \in (0, \infty)$ which is independent of $f$ and $c$ such that
	\[
	\norm{\widetilde{f}}_{C^\gamma(\R)} \le d_\gamma c.
	\]
\end{lemma}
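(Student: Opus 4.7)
The plan is to (i) verify $\F f \in L^1(\R)$, ensuring $\widetilde f$ is well-defined, bounded, and continuous, (ii) conclude $f = \widetilde f$ almost everywhere by a Fourier inversion argument, and (iii) bound the Hölder seminorm of $\widetilde f$ via a dyadic splitting of the inversion integral at the natural cutoff $1/|x-x'|$.

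For step (i), the hypothesis together with $\gamma > 0$ immediately gives $\F f \in L^1(\R) \cap L^2(\R)$. Hence $\widetilde f$ is defined pointwise, and dominated convergence shows it is continuous. Moreover, $\norm{\widetilde f}_\infty \le \frac{c}{2\pi}\int_\R (1+|y|)^{-1-\gamma}\, dy$ is bounded by a constant depending only on $\gamma$ times $c$. For step (ii), I would invoke the classical fact that if $g \in L^2(\R)$ has $\F g \in L^1(\R)$, then $g$ agrees almost everywhere with the pointwise inverse Fourier transform of $\F g$; this follows from combining Plancherel's theorem with the $L^1$ inversion theorem (for instance via a Gaussian mollification and passage to the limit).

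Step (iii) is the essential estimate. Fix $x, x' \in \R$ and set $h := |x-x'|$; I may assume $h \in (0,1]$, since $h > 1$ is already controlled by the $L^\infty$ bound from step (i). Starting from
\[
\widetilde f(x) - \widetilde f(x') = \frac{1}{2\pi}\int_\R \bigl(e^{-\bi z x} - e^{-\bi z x'}\bigr)\, \F f(z)\, dz,
\]
I split the integral at $|z| = 1/h$. On $\{|z| \le 1/h\}$ I use $|e^{-\bi z x} - e^{-\bi z x'}| \le h|z|$ together with the hypothesis to produce a contribution bounded by
\[
\frac{c h}{\pi}\int_0^{1/h} \frac{z}{(1+z)^{1+\gamma}}\, dz \le c_\gamma\, c\, h \cdot h^{\gamma - 1} = c_\gamma\, c\, h^{\gamma},
\]
where $\gamma < 1$ is crucial for the integral to blow up like $h^{\gamma - 1}$ as $h \to 0$. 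On $\{|z| > 1/h\}$ the trivial bound $2$ yields a contribution of order $c \int_{1/h}^\infty z^{-1-\gamma}\, dz = c_\gamma\, c\, h^\gamma$. Summing the two pieces gives $[\widetilde f]_{C^\gamma(\R)} \le d_\gamma c$, which combined with the $L^\infty$ bound from step (i) yields the claim.

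No genuine obstacle is expected; the only delicate point is confirming that the low-frequency integral grows like $h^{\gamma - 1}$ (rather than $\log(1/h)$ or a constant), which is precisely the place where the restriction $\gamma < 1$ enters. The constant $d_\gamma$ then depends only on $\gamma$ and the two elementary integrals above.
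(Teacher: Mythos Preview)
Your proof is correct. The overall approach matches the paper's: both verify $\F f\in L^1(\R)$, invoke Fourier inversion for $f=\widetilde f$ a.e., bound $\|\widetilde f\|_\infty$ by $\tfrac{1}{2\pi}\|\F f\|_{L^1(\R)}$, and then estimate $|\widetilde f(x)-\widetilde f(x')|$ directly from the inversion integral using the decay hypothesis on $\F f$.

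The only difference is in the execution of the H\"older seminorm bound. You split the frequency integral at $|z|=1/|x-x'|$ and apply $|e^{-\bi z x}-e^{-\bi z x'}|\le\min(|z|\,|x-x'|,2)$ on the two pieces. The paper instead writes $|e^{-\bi z x}-e^{-\bi z x'}|=2|\sin(\tfrac{(x-x')z}{2})|$, drops the ``$1+$'' in $(1+|z|)^{1+\gamma}$, and performs the scaling substitution $z\mapsto z/(x-x')$ to factor out $|x-x'|^\gamma$ in one stroke, reducing everything to the single fixed integral $\int_\R 2|\sin(z/2)|\,|z|^{-1-\gamma}\,dz$. Your dyadic split makes the role of the condition $\gamma<1$ explicit (it governs the low-frequency divergence), whereas the paper's substitution is slightly slicker and avoids the case distinction $h\le 1$ versus $h>1$. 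Both arguments are standard and yield the same constant up to a harmless factor.
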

\begin{proof}
	The proof is based on the proofs of Corollary 1 in~\cite{Hayashi2012} and Lemma 16.3 in~\cite{TARTAR2007}. 
	Since $\F f \in L^1(\R)$, $\widetilde{f}$ is well-defined and the Fourier inversion theorem yields $f = \widetilde{f}$ a.e.. Let $x, y \in \R$ with $x \neq y$. Then we have
	\begin{equation*}
		\begin{aligned}
			&|\widetilde{f}(x) - \widetilde{f}(y)| = \frac{1}{2\pi} |\int_\R (e^{- \bi x z} - e^{- \bi y z}) \F f (z) \, dz| \le \frac{1}{2\pi} \int_\R |e^{-\bi (x-y) z} - 1| |\F f (z)| \, dz \\
			& \qquad \qquad \le \frac{c}{2 \pi} \int_\R \frac{2|\sin(\frac{(x-y)z}{2})|}{(1 + |z|)^{1+\gamma}} \, dz \le \frac{c}{2\pi} \int_\R \frac{2|\sin(\frac{(x-y)z}{2})|}{|z|^{1+\gamma}} \, dz.
		\end{aligned}
	\end{equation*}
	Substituting with $\R \rightarrow \R, \, z \mapsto \frac{z}{(x-y)}$ we thus obtain
	\begin{equation*}
		|\widetilde{f}(x) -\widetilde{f}(y)| \le \frac{c}{2\pi} |x-y|^\gamma \int_\R \frac{2|\sin(\frac{z}{2})|}{|z|^{1+\gamma}} \, dz.
	\end{equation*}
	Together with the basic inequality $\norm{\widetilde{f}}_\infty \le \frac{1}{2\pi} \norm{\F f}_{L^1(\R)}$ this finishes the proof.
\end{proof}

Now we are able to prove Corollary~\ref{cor:HoelderReg}.

\begin{proof}[Proof of Corollary~\ref{cor:HoelderReg}]
	The proof is divided into two steps. In the first step we derive the
	validity of \eqref{eqn_cor_localHoeld_1} and in the second step we show the continuity of $(0,1] \times \R \rightarrow \R, \, (t,x) \mapsto q_t(x)$.
	
	Let $t^\ast \in (0,1]$, $t \in [t^\ast, 1]$ and $\gamma \in (0, \alpha)$. In the following let $c_1,c_2,\dots\in (0,\infty)$ denote positive constants,
	which do not depend on $t$.

	\textit{Existence and H\"older continuity of $q_t$:}
	We set $\eps_y := \log^2(|y|) / y^2$ and
	\[
	\tau_{t, y} := \inf\{ s \in [t-\eps_y, t] \colon X_s \notin (\xi - \delta, \xi + \delta)\}
	\]  
	for $y \in \R \setminus \{0\}$ with $\frac{\log^2(|y|)}{|y|^2} < t$. Then with Corollary~\ref{cor1:fourierBound} there exists a function $p_t \in L^2(\R)$ which satisfies \eqref{thm1_density} such that for $\lambda$-almost all $y \in \R$ with $|y| > 1$ and $\frac{\log^2(|y|)}{|y|^2} < t$ it holds $| \F p_t(y)| \le \| \varphi \|_\infty$ as well as
	\begin{equation}\label{cor_jump_1}
		\begin{aligned}
			|\F p_t (y)| & \le c_1 \Big[ |y|^{-\log(|y|)/2} + \frac{\log^2(|y|)}{y^2} \\
			& \qquad +  |y|\EE \bigl [ \Big|\int_{t - \eps_y}^t \Big(\frac{\mu}{\sigma} - \frac{\delta_\sigma}{2}\Big)(X^{\tau_{t, y}}_{s}) - \Big(\frac{\mu}{\sigma} - \frac{\delta_\sigma}{2}\Big)(X^{\tau_{t, y}}_{t- \eps_y})\, ds\Big|\bigr ] \Big].
		\end{aligned}
	\end{equation} 
	Therefore, we obtain for $\lambda$-almost all $y \in \R$ with $|y| > 1$ and $\frac{\log^2(|y|)}{|y|^2} < t$  
	\begin{equation*}
		\begin{aligned}
			|\F p_t (y)| & \le c_2 \Big[ |y|^{-\log(|y|)/2} + \frac{\log^2(|y|)}{y^2} + |y|\eps_y \Big]
		\end{aligned}
	\end{equation*} 
	and hence there exists because of $| \F p_t| \le \| \varphi \|_\infty \text{ a.e.}$ for all $p \in (1, \infty)$ a constant $d^{(p)}_1 = d^{(p)}_1(t^\ast) \in (0, \infty)$ such that
	\begin{equation}\label{cor_jump_2}
		\big( \int_{\R} |\F p_t(y)|^p \, dy \big)^{1/p} < d^{(p)}_1.
	\end{equation}
	In the remaining part of this proof step we show that it holds for $\lambda$-almost all $y \in \R$
	\begin{equation}\label{cor_jump_3}
		|\F p_t(y)|  \le \frac{c_3}{(1 + |y|)^{1 + \gamma}}.
	\end{equation}
	Then with an application of Lemma~\ref{lem:FourierHoelder} we obtain for $\widetilde{p_t} = \frac{1}{2\pi} \int_\R e^{-\bi z(\cdot)}\, \F p_t(z)\, dz$
	\begin{equation*}
		p_t = \widetilde{p_t} \text{ a.e.} \qquad \text{and} \qquad \norm{\widetilde{p_t}}_{C^\gamma(\R)} \le c_4.
	\end{equation*} 
	We may assume 
	\begin{equation}\label{cor_jump_6}
		p_t = \widetilde{p_t} = \frac{1}{2\pi} \int_\R e^{-\bi z(\cdot)}\, \F p_t(z)\, dz. 
	\end{equation} 
 	According to \eqref{thm1_density} we take
	\[
	q_t(x) = p_t \bigl(\int_{\xi-\delta}^x \frac{1}{\sigma^\ast(z)} \, dz \bigr) \, \big/ \, |\sigma^\ast(x)|, \qquad x \in \R.
	\]
	Since $\supp(q_t) \subset \supp(\varphi)$, $\sigma^\ast$ is Lipschitz continuous and bounded away from zero, we then have $q_t \in L^1(\R)$ and $\sup_{t \in [t^\ast, 1]} \norm{q_t}_{C^\gamma(\R)} < \infty$.
	
	Subsequently, we show \eqref{cor_jump_3}. Using that $\frac{\mu}{\sigma} - \frac{\sigma'}{2}$ is $\alpha$-Hölder continuous on $[\xi - \delta, \xi)$ and on $(\xi, \xi + \delta]$, respectively, it holds
	\begin{equation*}
		\Big|\Big(\frac{\mu}{\sigma} - \frac{\sigma'}{2}\Big)(x) - \Big(\frac{\mu}{\sigma} - \frac{\sigma'}{2}\Big)(z)\Big| \le c_5 (|x-z|^\alpha + 1_{\{(x-\xi)(z-\xi) \le 0\}}), \quad x,z \in [\xi-\delta, \xi+\delta],
	\end{equation*}
	c.f. the proof of Lemma 1 in~\cite{MGY21}. Consequently, we obtain with \eqref{prop1_6} and \eqref{cor_jump_1} for $\lambda$-almost all $y \in \R$ with $|y| > 1$ and $\frac{\log^2(|y|)}{|y|^2} < t$ 
	\begin{equation}\label{cor_jump_4}
		\begin{aligned}
			&|\F p_t (y)| \\
			& \qquad \le c_6 \Big[ |y|^{-\log(|y|)/2} + \frac{\log^2(|y|)}{y^2} +  |y|\EE \bigl [ \int_{t - \eps_y}^t |X^{\tau_{t, y}}_{s} - X^{\tau_{t, y}}_{t- \eps_y}|^\alpha + 1_{\{(X^{\tau_{t, y}}_{s} - \xi)(X^{\tau_{t, y}}_{t - \eps_y} - \xi)\le 0\}}\, ds\bigr ] \Big] \\
			&\qquad \le c_7 \Big[ |y|^{-\log(|y|)/2} + \frac{\log^2(|y|)}{y^2} +  |y|(\eps_y)^{1 + \alpha/2} + |y| \int_{t - \eps_y}^t \PP((X^{\tau_{t, y}}_{s} - \xi)(X^{\tau_{t, y}}_{t - \eps_y} - \xi)\le 0)\, ds \Big].
		\end{aligned}
	\end{equation} 
	Let $\beta \in (0, 1/2)$ and $p \in (1, \infty)$. Now there exists with \eqref{prop1_6} a constant $d^{(p)}_2 \in (0,\infty)$ such that for all $y \in \R$ with $|y| > 1$ and $\frac{\log^2(|y|)}{|y|^2} \le t/2$ and all $s \in (t - \eps_y, t)$ it holds
	\begin{equation}\label{cor_jump_5}
		\begin{aligned}
			&\PP((X^{\tau_{t, y}}_s - \xi)(X^{\tau_{t, y}}_{t - \eps_y} - \xi)\le 0)\\
			 &\qquad \qquad  \le \PP(|X^{\tau_{t, y}}_{t - \eps_y} - \xi| \le (\eps_y)^{1/2 - \beta}) + \PP(|X^{\tau_{t, y}}_s - X^{\tau_{t, y}}_{t-\eps_y}| \ge (\eps_y)^{1/2 - \beta}) \\
			& \qquad \qquad \le \PP(|X^{\tau_{t, y}}_{t - \eps_y} - \xi| \le (\eps_y)^{1/2 - \beta}) + (\eps_y)^{p(\beta- 1/2)} \EE[|X^{\tau_{t, y}}_s - X^{\tau_{t, y}}_{t-\eps_y}|^p] \\
			&\qquad \qquad \le \PP(|X^{\tau_{t, y}}_{t - \eps_y} - \xi| \le (\eps_y)^{1/2 - \beta}) + d^{(p)}_2 (\eps_y)^{p\beta}.
		\end{aligned}
	\end{equation}
	Let $y \in \R$ with $|y| > 1, \frac{\log^2(|y|)}{|y|^2} \le t/2$ and $(\eps_y)^{1/2 - \beta} \le \delta/2$. Let $\tphi \in C^\infty(\R)$ with $\supp(\tphi) \subset B_{(3/4)\delta}(\xi)$ and $\tphi(x) = 1$ for $x \in B_{\delta/2}(\xi)$. Let $p^{\tphi}_t$ denote the function $p_t$ which one obtains with Theorem~\ref{thm:localDensity} with $\tphi$ instead of $\varphi$. We get for $H$ as in \eqref{prop1_H} with integration by substitution  because of $t - \eps_y \ge t/2 \ge t^\ast/2$, $\tau_{t, y} \ge t - \eps_y$, $(\eps_y)^{1/2 - \beta} \le \delta/2$ and $\tphi \vert_{B_{\delta/2}(\xi)} = 1$
	\begin{equation*}
		\begin{aligned}
			&\PP(|X^{\tau_{t, y}}_{t - \eps_y} - \xi| \le (\eps_y)^{1/2 - \beta})\\
			& \qquad \qquad = \PP(|X_{t - \eps_y} - \xi| \le (\eps_y)^{1/2 - \beta}) =  \int_{\xi - (\eps_y)^{1/2 - \beta}}^{\xi + (\eps_y)^{1/2 - \beta}} \tphi(x) \, \PP^{X_{t-\eps_y}}(dx) \\
			&\qquad \qquad  = \int_{\xi - (\eps_y)^{1/2 - \beta}}^{\xi + (\eps_y)^{1/2 - \beta}} p_t^{\tphi}(H(x))|H'(x)| \, dx  = \int_{H(B_{(\eps_y)^{1/2 - \beta}}(\xi))} p_t^{\tphi}(x) \, dx.
		\end{aligned}
	\end{equation*}
	Since $H$ is Lipschitz continuous and strictly monotonic as seen in the proof of Theorem~\ref{thm:localDensity}, we therefore obtain with \eqref{cor_jump_2} for $\hat p \in (1,2]$ and $\hat q \in (1, \infty)$ with $\frac{1}{\hat p} + \frac{1}{\hat{q}} = 1$ using the Fourier inversion theorem and the Hausdorff-Young inequality for a ${\widetilde d}^{(\hat p)}_1 = {\widetilde d}^{(\hat p)}_1(t^\ast) \in (0, \infty)$
	\begin{equation*}
		\begin{aligned}
			& \PP(|X_{t - \eps_y} - \xi| \le (\eps_y)^{1/2 - \beta}) \\
			& \qquad \qquad\le \big( \lambda(H(B_{(\eps_y)^{1/2 - \beta}}(\xi))) \big)^{1/\hat p} \cdot \big( \int_\R |p_t^{\tphi}(x)|^{\hat q} \, dx \big)^{1/\hat q} \\
			& \qquad \qquad \le (c_9)^{1/\hat p} (\eps_y)^{(1/2 - \beta)/\hat p} \cdot \big( \int_\R |\frac{1}{2\pi} \F [\F p_t^{\tphi}](x)|^{\hat q} \, dx \big)^{1/\hat q} \\
			& \qquad \qquad \le (c_9)^{1/\hat p} (\eps_y)^{(1/2 - \beta)/\hat p} \cdot (2\pi)^{-1+1/\hat q} \big( \int_\R |\F p_t^{\tphi}(x)|^{\hat p} \, dx \big)^{1/\hat p} \\
			& \qquad \qquad \le  {\widetilde d}^{(\hat p)}_1 (\eps_y)^{(1/2 - \beta)/\hat p}.
		\end{aligned}
	\end{equation*}
	Choosing $\beta \in (0, 1/2), \hat p \in (1, 2]$ small and $p$ in \eqref{cor_jump_5} large enough shows with \eqref{cor_jump_4} and the fact that $|\F p_t| \le \| \varphi \|_\infty$ a.e. the desired bound in \eqref{cor_jump_3}.
	
	\textit{Continuity of the map $(0,1] \times \R \rightarrow \R, \, (t,x) \mapsto q_t(x)$:}
	In the following we derive the continuity of the function $(0,1] \times \R \rightarrow \R, \, (t,x) \mapsto q_t(x)$. Note that by the choice of $q_t$ it suffices to show that $(0,1] \times \R \rightarrow \R, \, (t,x) \mapsto p_t(x)$ is continuous. Therefore, let $s, t \in (0,1]$, $x,y \in \R$, let $H\colon \R \rightarrow \R$ be as in \eqref{prop1_H} and set $Y = H(X)$. As can be seen in the proof of Theorem~\ref{thm:localDensity} we have
	\begin{equation} \label{eqn_HoelderReg_2}
		(\varphi \circ H^{-1})(x) \, \PP^{Y_t}(dx) = p_t(x) \, dx.
	\end{equation}
	Now assumption \eqref{cor_jump_6} yields
	\begin{equation*}
		\begin{aligned}
			|p_s(x) - p_t(x)| = \frac{1}{2\pi} |\int_\R (\F p_s (z) - \F p_t (z)) e^{- \bi x z}  \, dz| \le \frac{1}{2\pi}  \norm{\F p_s- \F p_t}_{L^1(\R)}.
		\end{aligned}
	\end{equation*}
	Let $(s_n)_{n \in \N} \subset (0,1]$ be a sequence with $\lim_{n \rightarrow \infty} s_n = t$. By~\eqref{eqn_HoelderReg_2} it holds for $\lambda$-almost all $u \in \R$
	\begin{equation*}
		\begin{aligned}
			&\F p_{s_n} (u) = \F[(\varphi \circ H^{-1})(x) \, \PP^{Y_{s_n}}(dx)] (u) = \EE[e^{\bi u Y_{s_n}} (\varphi \circ H^{-1})(Y_{s_n})] 
			\\& \qquad \qquad \rightarrow \EE[e^{\bi u Y_t} (\varphi \circ H^{-1})(Y_t)] = \F p_t (u)
		\end{aligned}
	\end{equation*}
	as $n \rightarrow \infty$. Assume that $t^\ast \in (0,1]$ is small enough and $s_n \ge t^\ast$ for all $n \in \N$. Now with \eqref{cor_jump_3} and \eqref{cor_jump_6} it holds for $\lambda$-almost all $y \in \R$ 
	\[
	|\F p_v(y)| \le \frac{c_3}{(1+ |y|)^{1+\gamma}}, \qquad v \in \{t\} \cup \{s_n \colon n \in \N\}.
	\] 
	Therefore, an application of the dominated convergence theorem yields the continuity of the map $(0,1] \rightarrow \R, \, t \mapsto p_t(x)$. Combining this with
	\begin{equation*}
		\begin{aligned}
			|p_s(y) - p_t(x)|  \le |p_s(y) - p_s(x)| + |p_s(x) - p_t(x)| \le \norm{p_s}_{C^{\gamma}(\R)}|y - x|^{\gamma} + |p_s(x) - p_t(x)|
		\end{aligned}
	\end{equation*}
	and $\sup_{s \in [t/2, 1]} \norm{p_s}_{C^\gamma(\R)} < \infty$ yields the claim.
\end{proof}

\section*{Acknowledgement}
I want to express my gratitude to Thomas M\"uller-Gronbach and Larisa Yaroslavtseva for their encouragement and useful critiques of this article.

\bibliographystyle{acm}
\bibliography{bibfile}

\end{document}